\newtheorem{theorem}{Theorem}
\newtheorem{remark}{Remark}
\newtheorem{o-problem}{Open problem}
\title{On reduction for eigenfunctions of graphs\thanks{This work was funded by the Russian Science Foundation under grant 22-21-20018}}
\author[1]{Alexandr Valyuzhenich\thanks{Chelyabinsk State University, Chelyabinsk, Russia; Email address: graphkiper@mail.ru}}
\date{}
\begin{document}
\maketitle

\begin{abstract}
In this work, we prove a general version of the reduction lemmas for eigenfunctions of graphs admitting involutive automorphisms of a special type.
\end{abstract}

\section{Introduction}\label{Sec:Intro}
Recently, for the eigenspaces of the Hamming and Johnson graphs, reduction lemmas were established (see \cite[Lemma 1]{V17} and \cite[Lemma 1]{VMV18}).
In \cite{EGGV22,MV20,V17,VV19,V21,VMV18,V20,V20Rec}, these lemmas were applied to study eigenfunctions and equitable $2$-partitions of the Hamming and Johnson graphs.
In this work, we generalize the reduction lemmas to graphs admitting involutive automorphisms of a special type.
In particular, we prove that an analogue of the reduction lemmas holds for the halved $n$-cube.

The paper is organized as follows. In Section \ref{Sec:Definitions}, we introduce basic definitions.
In Section \ref{Sec:Reduction}, we prove a general version of the reduction lemmas.
Then, in Section \ref{Sec:Examples}, we apply this result to the Hamming graph, the Johnson graph and the halved $n$-cube.
\section{Basic definitions}\label{Sec:Definitions}
Let $G$ be a graph. The vertex set of $G$ is denoted by $V(G)$.
Given a vertex $x$ of $G$, denote by $N_G(x)$ the set of all neighbors of $x$ in $G$.
For a set $W\subseteq V(G)$, denote by $G[W]$ the subgraph of $G$ induced by $W$.
The automorphism group of $G$ is denoted by $\mathrm{Aut}(G)$.

The eigenvalues of a graph are the eigenvalues of its adjacency matrix.
Let $G$ be a graph and let $\lambda$ be an eigenvalue of $G$.
A function $f:V(G)\longrightarrow{\mathbb{R}}$ is called a {\em $\lambda$-eigenfunction} of $G$ if $f\not\equiv 0$ and the equality
\begin{equation}\label{Eq:Eigenfunction}
\lambda\cdot f(x)=\sum_{y\in{N_G(x)}}f(y)
\end{equation}
holds for any vertex $x\in V(G)$. The set of functions $f:V(G)\longrightarrow{\mathbb{R}}$ satisfying (\ref{Eq:Eigenfunction}) for any vertex $x\in V(G)$ is called a {\em $\lambda$-eigenspace} of $G$. Denote by $U_{\lambda}(G)$ the $\lambda$-eigenspace of $G$.

Let $G$ be a graph. Let $\varphi$ be an automorphism of $G$ and let $\{V_1, V_2, V_3\}$ be a partition of $V(G)$.
The pair $(\varphi,\{V_1, V_2, V_3\})$ is called {\em special} if the following conditions hold:
\begin{enumerate}
    \item $\varphi(V_1)=V_2$ and $\varphi(V_2)=V_1$, i.e. $\varphi$ swaps $V_1$ and $V_2$.
    
    
    \item For any vertex $x\in V_i$, where $i\in \{1,2\}$, it holds $N_G(x)\cap V_{3-i}=\{\varphi(x)\}$.
    
    \item $\varphi(x)=x$ for any vertex $x\in V_3$, i.e. $\varphi$ stabilises $V_3$ pointwise.
\end{enumerate}

\begin{remark}\label{RemarkBitrade}
If  $(\varphi,\{V_1, V_2, V_3\})$ is a special pair of a graph $G$, then the following properties hold:
\begin{itemize}
  \item The graphs $G[V_1]$ and $G[V_2]$ are isomorphic.
  \item The graph $G[V_1\cup V_2]$ is isomorphic to the Cartesian product of $G[V_1]$ and $K_2$.
  \item The automorphism $\varphi$ is involutive.
\end{itemize}
\end{remark}

Suppose $P=(\varphi,\{V_1, V_2, V_3\})$ is a special pair of a graph $G$.
Let $G[V_1]$ and $G[V_2]$ be isomorphic to a graph $G_0$ and let $\varphi_1:V_1\longrightarrow V(G_0)$ and $\varphi_2:V_2\longrightarrow V(G_0)$ be the corresponding isomorphisms. Given a function $f:V(G)\longrightarrow{\mathbb{R}}$, we define a function $f_{P,\varphi_1,\varphi_2}$ on the vertices of $G_0$ as follows:
$$f_{P,\varphi_1,\varphi_2}(x)=f(\varphi_{1}^{-1}(x))-f(\varphi_{2}^{-1}(x)).$$

Let $\{i_1,\ldots,i_k\}$ be a subset of $\{1,2,\ldots,n\}$, where $1\leq k<n$.
For a vector $x\in \mathbb{Z}_{q}^n$, denote by $\Delta_{i_1,\ldots,i_k}(x)$ the vector obtained from $x$ by deleting coordinates with numbers $i_1,\ldots,i_k$.

Let $i,j\in \{1,2,\ldots,n\}$ and $i<j$. For a vector $x\in \mathbb{Z}_{q}^n$, denote by $\pi_{i,j}(x)$ the vector obtained from $x$ by interchanging the ith and jth coordinates.

\section{Reduction for eigenfunctions of graphs}\label{Sec:Reduction}
In this section, we prove the main theorem of this paper.
\begin{theorem}\label{Th:Reduction}
Suppose $P=(\varphi,\{V_1, V_2, V_3\})$ is a special pair of a graph $G$.
Let $G[V_1]$ and $G[V_2]$ be isomorphic to a graph $G_0$ and let $\varphi_1:V_1\longrightarrow V(G_0)$ and $\varphi_2:V_2\longrightarrow V(G_0)$ be the corresponding isomorphisms. If $f$ is a $\lambda$-eigenfunction of $G$, then $f_{P,\varphi_1,\varphi_2}\in U_{\lambda+1}(G_0)$.
\end{theorem}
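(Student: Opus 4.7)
The plan is to verify the $(\lambda+1)$-eigenfunction equation for $g := f_{P,\varphi_1,\varphi_2}$ at an arbitrary vertex $x \in V(G_0)$ by a direct calculation. Fix such an $x$ and set $u := \varphi_1^{-1}(x) \in V_1$ and $v := \varphi_2^{-1}(x) \in V_2$, so that $g(x) = f(u) - f(v)$. I interpret the phrase ``the corresponding isomorphisms'' as requiring $\varphi_1$ and $\varphi_2$ to be compatible under the swap $\varphi$; equivalently, $v = \varphi(u)$, and hence $\varphi(v) = u$ since $\varphi$ is involutive (Remark~\ref{RemarkBitrade}). Without this compatibility the reduced function need not be an eigenfunction at all, as a small check on $C_4$ already shows.

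The first step is to rewrite the candidate eigensum. Because $\varphi_1:G[V_1]\to G_0$ is a graph isomorphism, $\varphi_1^{-1}$ sends $N_{G_0}(x)$ bijectively onto $N_G(u)\cap V_1$, and analogously $\varphi_2^{-1}$ sends $N_{G_0}(x)$ onto $N_G(v)\cap V_2$. Therefore
\[
\sum_{y\in N_{G_0}(x)} g(y) \;=\; \sum_{z\in N_G(u)\cap V_1} f(z) \;-\; \sum_{z\in N_G(v)\cap V_2} f(z).
\]

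Next I would expand each inner sum using the $\lambda$-eigenequation for $f$. By conditions~(2) and~(3) of a special pair, $N_G(u)$ decomposes as the disjoint union $(N_G(u)\cap V_1)\sqcup\{\varphi(u)\}\sqcup(N_G(u)\cap V_3)$, and symmetrically for $v$ (with $V_1$ and $V_2$ interchanged). Substituting into the previous display gives
\[
\sum_{y\in N_{G_0}(x)} g(y) \;=\; \lambda\bigl(f(u)-f(v)\bigr) - \bigl(f(\varphi(u))-f(\varphi(v))\bigr) - \Bigl(\sum_{z\in N_G(u)\cap V_3} f(z) - \sum_{z\in N_G(v)\cap V_3} f(z)\Bigr).
\]

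The one step that carries the content is the cancellation of the two $V_3$-sums. Since $\varphi$ is a graph automorphism that fixes $V_3$ pointwise, applying $\varphi$ to $N_G(u)\cap V_3$ produces $N_G(\varphi(u))\cap V_3 = N_G(v)\cap V_3$ while leaving each vertex unchanged; hence the two sets—and their $f$-sums—coincide. Combined with $\varphi(u)=v$ and $\varphi(v)=u$, the right-hand side collapses to $\lambda(f(u)-f(v)) - (f(v)-f(u)) = (\lambda+1)\,g(x)$, which is the desired equation. No genuine obstacle arises beyond this $V_3$-cancellation, which visibly depends on all three conditions defining a special pair.
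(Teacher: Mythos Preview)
Your argument is correct and is essentially the paper's proof unpacked: the paper first forms $h(x)=f(x)-f(\varphi(x))\in U_\lambda(G)$, observes that $h$ vanishes on $V_3$ and satisfies $h(\varphi(x))=-h(x)$, and then verifies the $(\lambda+1)$-eigenequation for $h|_{V_1}$ at a single vertex of $V_1$, which is exactly your computation with the subtraction performed up front rather than at the end. Your remark about the compatibility $\varphi_2^{-1}=\varphi\circ\varphi_1^{-1}$ is well taken; the paper uses it implicitly in the final step where it asserts $f_{P,\varphi_1,\varphi_2}=h|_{V_1}\circ\varphi_1^{-1}$.
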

\begin{proof}
For every $i\in \{1,2,3\}$, denote $G_i=G[V_i]$. Define a function $h$ on the vertices of $G$  as follows:
$$h(x)=f(x)-f(\varphi(x)).$$
Since $f$ is a $\lambda$-eigenfunction of $G$ and $\varphi\in \mathrm{Aut}(G)$, we have $h\in U_{\lambda}(G)$.
The restriction of $h$ to $V_1$ is denoted by $h_1$.

Let us prove that $h_1\in U_{\lambda+1}(G_1)$. Let us consider a vertex $x\in V_1$.
Since $h\in U_{\lambda}(G)$, we have $$\lambda\cdot h(x)=\sum_{y\in{N_G(x)}}h(y).$$
Then 
\[
\begin{gathered}
\lambda\cdot h(x)=\sum_{y\in{N_G(x)\cap V_1}}h(y)+\sum_{y\in{N_G(x)\cap V_2}}h(y)+\sum_{y\in{N_G(x)\cap V_3}}h(y)= \\
=\sum_{y\in{N_{G_1}(x)}}h(y)+h(\varphi(x))
\end{gathered}
\]
Hence we obtain that $$(\lambda+1)\cdot h(x)=\sum_{y\in{N_{G_1}(x)}}h(y).$$
Therefore, $h_1\in U_{\lambda+1}(G_1)$.
Finally, note that $f_{P,\varphi_1,\varphi_2}=h_1(\varphi_1^{-1})$. Since $h_1\in U_{\lambda+1}(G_1)$ and $\varphi_1$ is an isomorphism of $G_1$ and $G_0$, we obtain that
$f_{P,\varphi_1,\varphi_2}\in U_{\lambda+1}(G_0)$.
\end{proof}

\section{Examples}\label{Sec:Examples}
In this section, we discuss how to apply Theorem \ref{Th:Reduction} to the Hamming graph, the Johnson graph and the halved $n$-cube. In particular, we show that these graphs admit special pairs.
\subsection{Hamming graph}\label{SubSec:Hamming }
The {\em  Hamming graph} $H(n,q)$ is defined as follows. The vertex set of $H(n,q)$ is $\mathbb{Z}_{q}^n$, 
and two vertices are adjacent if they differ in exactly one coordinate.

Let $k,m\in \mathbb{Z}_q$, $k\neq m$ and $r\in \{1,2,\ldots,n\}$.
Denote $$V_1=\{x\in\mathbb{Z}_{q}^n:x_r=k\},$$ 
$$V_2=\{x\in\mathbb{Z}_{q}^n:x_r=m\}$$ and
$V_3=\mathbb{Z}_{q}^n\setminus (V_1\cup V_2)$. Denote $X=\{V_1,V_2,V_3\}$. 

Define a map $\varphi:\mathbb{Z}_{q}^n\longrightarrow \mathbb{Z}_{q}^n$ as follows:
$$\varphi(x_1,\ldots,x_n)=(x_1,\ldots,x_{r-1},(km)(x_r),x_{r+1},\ldots,x_n).$$
Note that $(\varphi,X)$ is a special pair of $H(n,q)$.

Let $G_0=H(n-1,q)$. Define maps $\varphi_1:V_1\longrightarrow V(G_0)$ and $\varphi_2:V_2\longrightarrow V(G_0)$ as follows:
$$\varphi_1(x)=\Delta_{r}(x)$$ and $$\varphi_2(y)=\Delta_{r}(y).$$
One can check that $G[V_1]$ and $G[V_2]$ are isomorphic to $G_0$ and $\varphi_1$ and $\varphi_2$ are the corresponding isomorphisms.
Thus, $(\varphi,X)$, $G_0$, $\varphi_1$ and $\varphi_2$ satisfy the conditions of Theorem \ref{Th:Reduction}.

\subsection{Johnson graph}\label{SubSec:Johnson }
The {\em Johnson graph} $J(n,k)$ is defined as follows. The vertex set of $J(n,k)$ is $\{x\in\mathbb{Z}_{2}^n:|x|=k\}$, 
and two vertices are adjacent if they differ in exactly two coordinates.

Let $i,j\in \{1,2,\ldots,n\}$ and $i<j$.
Denote $$V_1=\{x\in\mathbb{Z}_{2}^n:|x|=k,x_i=1,x_j=0\},$$ 
$$V_2=\{x\in\mathbb{Z}_{2}^n:|x|=k,x_i=0,x_j=1\}$$ and
$V_3=V(J(n,k))\setminus (V_1\cup V_2)$. Denote $X=\{V_1,V_2,V_3\}$.

Define a map $\varphi:V(J(n,k))\longrightarrow V(J(n,k))$ as follows:
$$\varphi(x)=\pi_{i,j}(x).$$
Note that $(\varphi,X)$ is a special pair of $J(n,k)$.

Let $G_0=J(n-2,k-1)$. Define maps $\varphi_1:V_1\longrightarrow V(G_0)$ and $\varphi_2:V_2\longrightarrow V(G_0)$ as follows:
$$\varphi_1(x)=\Delta_{i,j}(x)$$ and $$\varphi_2(y)=\Delta_{i,j}(y).$$
One can check that $G[V_1]$ and $G[V_2]$ are isomorphic to $G_0$ and $\varphi_1$ and $\varphi_2$ are the corresponding isomorphisms.
Thus, $(\varphi,X)$, $G_0$, $\varphi_1$ and $\varphi_2$ satisfy the conditions of Theorem \ref{Th:Reduction}.

\subsection{Halved $n$-cube}\label{SubSec:Halved cube}
The {\em halved $n$-cube} $\frac{1}{2}H(n)$ is defined as follows. The vertex set of $\frac{1}{2}H(n)$ is $\{x\in\mathbb{Z}_{2}^n:|x|~\text{is even}\}$, 
and two vertices are adjacent if they differ in exactly two coordinates.

Let $i,j\in \{1,2,\ldots,n\}$ and $i<j$.
Denote $$V_1=\{x\in\mathbb{Z}_{2}^n:|x|~\text{is even},x_i=1,x_j=0\},$$ 
$$V_2=\{x\in\mathbb{Z}_{2}^n:|x|~\text{is even},x_i=0,x_j=1\}$$ and
$V_3=V(\frac{1}{2}H(n))\setminus (V_1\cup V_2)$. Denote $X=\{V_1,V_2,V_3\}$.

Define a map $\varphi:V(\frac{1}{2}H(n))\longrightarrow V(\frac{1}{2}H(n))$ as follows:
$$\varphi(x)=\pi_{i,j}(x).$$
Note that $(\varphi,X)$ is a special pair of $\frac{1}{2}H(n)$.

We define a graph $G_0$  as follows. The vertex set of $G_0$ is $\{x\in\mathbb{Z}_{2}^{n-2}:|x|~\text{is odd}\}$, 
and two vertices are adjacent if they differ in exactly two coordinates. 
Note that $G_0$ is isomorphic to $\frac{1}{2}H(n-2)$.
Define maps $\varphi_1:V_1\longrightarrow V(G_0)$ and $\varphi_2:V_2\longrightarrow V(G_0)$ as follows:
$$\varphi_1(x)=\Delta_{i,j}(x)$$ and $$\varphi_2(y)=\Delta_{i,j}(y).$$
One can check that $G[V_1]$ and $G[V_2]$ are isomorphic to $G_0$ and $\varphi_1$ and $\varphi_2$ are the corresponding isomorphisms.
Thus, $(\varphi,X)$, $G_0$, $\varphi_1$ and $\varphi_2$ satisfy the conditions of Theorem \ref{Th:Reduction}.

\section{Acknowledgements}
The author is grateful to Sergey Goryainov and Ivan Mogilnykh for helpful discussions.

\end{document}